\documentclass[12pt, reqno]{amsart}
\usepackage{amsmath, amsthm, amscd, amsfonts, amssymb, graphicx, color}
\usepackage[bookmarksnumbered,plainpages]{hyperref}
\usepackage[all]{xy}

\setlength{\textwidth}{6.5in} \setlength{\textheight}{8in}
\setlength{\evensidemargin}{-0.2in}
\setlength{\oddsidemargin}{-0.2in} \setlength{\topmargin}{-.5cm}

\newtheorem{theorem}{Theorem}[section]
\newtheorem{lemma}[theorem]{Lemma}
\newtheorem{proposition}[theorem]{Proposition}
\newtheorem{corollary}[theorem]{Corollary}
\usepackage{enumerate}
\theoremstyle{definition}

\newtheorem{example}[theorem]{Example}

\newtheorem{Theorem}{\quad Theorem}[section]

\newtheorem{remark}[Theorem]{\quad Remark}
\numberwithin{equation}{section}
\input{mathrsfs.sty}

\begin{document}
\title[The operator--valued parallelism]{The operator--valued parallelism and norm-parallelism in matrices}
\author[ M. Mohammadi Gohari and M. Amyari]{M. Mohammadi Gohari and M. Amyari$^*$}
\address{Department of Mathematics, Mashhad Branch,
 Islamic Azad University, Mashhad, Iran.}
  \email{mahdi.mohammadigohari@gmail.com}
\email{maryam\_amyari@yahoo.com  and  amyari@mshdiau.ac.ir}
\subjclass[2010]{Primary 46L08, Secondary 47A30, 47B10, 47B47}
\keywords{Hilbert $C^*$-module, orthonormal basis, parallelism, Schatten p-norm.\\
*Corresponding author}
\begin{abstract}
Let $\mathcal{H}$ be a Hilbert space, and let $K(\mathcal{H})$ be the $C^*$-algebra of compact operators
on $\mathcal{H}$. In this paper,  we present some characterizations of the norm-parallelism for elements
 of a Hilbert $K(\mathcal{H})$-module by employing the Birkhoff--James orthogonality.
 Among other things, we present a characterization of transitive relation of the norm-parallelism for elements in a certain Hilbert $K(\mathcal{H})$-module.
 We also give some characterizations of the Schatten $p$-norms and the operator norm-parallelism for matrices.
\end{abstract}
\maketitle
\section{Introduction and preliminaries}
Throughout the  paper, let $(\mathcal{H}, [\cdot, \cdot ])$  be a Hilbert space. Let $B(\mathcal{H})$ and $K(\mathcal{H})$
 denote the $C^*$-algebras of all bounded linear operators and compact linear operators on $\mathcal{H}$, respectively.
 The numerical range of an operator $T\in B(\mathcal{H})$ is defined by $W(T)=\{[ T\xi,\xi]:~~~\xi\in\mathcal{H},~~~\|\xi\|=1\}$.
 Using the Gelfand--Naimark theorem, we identify any $C^*$-algebra $\mathcal{A}$ as a $C^*$-subalgebra of $B(\mathcal{H})$ for some Hilbert space $\mathcal{H}$. An (left) inner product $\mathcal{A}$-module is a left $\mathcal{A}$-module $\mathcal{E}$
  equipped with an $\mathcal{A}$-valued inner product $\langle \cdot, \cdot \rangle,$ which is linear
  in the first variable and conjugate linear in the second and satisfies  (i) $\langle x,x \rangle\geq 0$ and $\langle x,x \rangle=0$ if and only if $x=0,$
  (ii) $\langle ax,y \rangle=a \langle x,y \rangle,$  (ii) $\langle x,y \rangle^*=\langle y,x \rangle$ for all $x,y\in \mathcal{E},~~~a\in \mathcal{A}$.
If $\mathcal{E}$ is complete with respect to the norm defined by $\| x\| = \| \langle x, x \rangle\|^\frac{1}{2} $, then $\mathcal{E}$
is called a Hilbert $\mathcal{A}$-module.
 Obviously, every Hilbert space is a Hilbert $\mathbb{C}$-module. Further, every $C^*$-algebra $\mathcal{A}$ can be regarded
 as a Hilbert $C^*$-module over itself under the inner product $\langle a, b \rangle = ab^*$.
 More details on the theory of Hilbert $C^*$-modules can be found, for example, in \cite{La}.

A projection $E \in B(\mathcal{H})$ is minimal if $EB(\mathcal{H})E = \mathbb{C}E $.
 For $\xi, \eta \in \mathcal{H}$, let $\xi \otimes \eta$ denote the corresponding elementary operator
    $(\xi \otimes \eta)(\zeta) = [\zeta, \eta  ] \xi$. It is easy to see that $\|\xi \otimes \eta\| = \| \xi\|\| \eta\|$.
    If $\xi \in \mathcal{H}$ is a unit vector, then $\xi \otimes \xi$ is a minimal projection. Conversely,
    any minimal projection is of this form. The set of all minimal projections in $K(\mathcal{H})$ is denoted by $P_{m}$.
A system $\{x_{\alpha}\}_{\alpha \in \Lambda}$  in a Hilbert $K(\mathcal{H})$-module $\mathcal{E}$ is orthonormal if
(i) $\|x_{\alpha}\|=1$ for all $\alpha \in \Lambda$, (ii)  $\langle x_{\alpha}, x_{\beta} \rangle = 0$ for all $\alpha \neq \beta$,
(iii)  $\langle x_{\alpha}, x_{\alpha} \rangle$ is a minimal projection for all $\alpha \in \Lambda$. An orthonormal system is said to be an orthonormal basis for $\mathcal{E}$ if it generates a dense submodule of $\mathcal{E}$. The orthogonal dimension of $\mathcal{E}$ is defined as the cardinal number of any of its orthonormal bases; see \cite{Bak, CA, MO}.

Let $(X, \| \cdot\|)$ be a complex normed space, and let $x, y \in X$. We say that $x$ is orthogonal to $y$ in the Birkhoff--James sense, denoted by $x \perp_{B} y$, if
$$\| x\| \leq \| x + \alpha y\|\quad \text {for~~~ all }\quad \alpha \in\mathbb{C}.$$
 It is easy to see that the Birkhoff--James orthogonality is equivalent to the usual
orthogonality in the case when $X$ is an inner product space.
 For other definitions and more  properties of the Birkhoff--James orthogonality in Hilbert $C^*$-modules; see \cite{AR1, Bhat,  BG, BO, C, C1, G,T}.

Let $x, y \in X$, we say that $x$ is norm-parallel to $y$, denoted by  $x \| y$, if
$$\| x + \lambda y\| = \| x\| + \| y\| \quad \text{for ~~~some}\quad \lambda \in \mathbb{T} = \{\alpha \in \mathbb{C}: | \alpha| = 1 \}.$$
For more  details one can see \cite{Am,Z1}.

Clearly, two elements of a Hilbert space are norm-parallel if they are linearly dependent. In the case of
normed linear spaces, two linearly dependent vectors are norm-parallel, but the converse is false in general.
Notice that the norm parallelism is symmetric and $\mathbb{R}$-homogenous, but not transitive 
(i.e., $x \|y$ and $y \| z$ do not imply $x \| z$ in general; see \cite[Example 2.7]{Z2},
 unless $X$ is smooth at $y$; see \cite[Theorem 3.1]{W}). Some characterizations of the norm-parallelism for operators and elements of Hilbert $C^*$-modules
were given in \cite{W, Z3, Z1, Z2}.

Some characterizations of the norm-parallelism for elements of a Hilbert $K(\mathcal{H})$-module
are given in Section 2.
 Let $\mathcal{F}$ be a Hilbert $C^*$-module, and let $x, y \in \mathcal{F}$.
 Zamani and Moslehian \cite[Theorem 2.3]{Z2} proved that $x \| y$ if and only if there exist a sequence of unit vectors ${\xi_{n}}$
 in a Hilbert space $\mathcal{H}$ and $\lambda \in \mathbb{T}$ such that 
 
\begin{align*}
\lim_{n \to \infty}{\rm Re}[ \langle x, \lambda y\rangle \xi_{n}, \xi_{n} ]  = \left\| x \right\|\left\| y \right\|.
\end{align*}
Suppose that $\mathcal{E}$ is a Hilbert $K(\mathcal{H})$-module, that $x,y \in  \mathcal{E},$
and that $\langle x, x\rangle$ is a minimal projection in $P_{m}$. We show that $x \| y$  if and only if there exists a unit vector $\xi \in \mathcal{H}$ such that
\begin{align*}
|[ \langle x, y\rangle \xi, \xi ]|  = \left\| y \right\|.
\end{align*}
Baki\'c and Gulja\u s \cite{Bak} proved  that  every Hilbert $K(\mathcal{H})$-module $\mathcal{E}$ has an orthonormal basis.
 Moreover there exists an orthonormal basis $\{x_{\alpha}\}_{\alpha \in \Lambda}$  for $\mathcal{E}$ such that  $\langle x_{\alpha}, x_{\alpha} \rangle = E$
for all $\alpha  \in \Lambda$ and for some minimal projection $E\in P_{m}$; see \cite[Remark 4]{Bak}.
  In the rest of paper, $\{x_{\alpha}\}_{\alpha \in \Lambda}^\xi$ denotes the orthonormal basis of $\mathcal{E}$ such that $\langle x_{\alpha}, x_{\alpha} \rangle = E = \xi \otimes \xi $ for all $\alpha \in \Lambda$ and for some unit vector $\xi \in \mathcal{H}$. We show that there is no element $0 \neq x \in \mathcal{E}$ such that $x \| x_{\alpha}$ for all $\alpha\in \Lambda$.

In Section 3, we use some techniques of \cite{Li1} to characterize matrix pairs which are norm-parallelism with respect to the Schatten $p$-norms.
Let $\| \cdot\|_{\nu}$ be an operator norm on $\mathcal{M}_{n\times n}$ induced by the vector norm $\nu$ on $\mathcal{M}_{n\times n}.$
In \cite{Bhat}, Bhatia, and $\check{S}$emrl conjectured that for $A, B \in \mathcal{M}_{n\times n}$ if $A \perp_{B} B$ in $\| \cdot\|_{\nu}$,
 then $Ay \perp_{B}By$ for some unit vector $y \in \mathcal{M}_{n\times 1}$ with $\nu(y)=1$ such that $\nu(Ay)=\| A\|_{\nu}$.
 Li and Schneider in \cite{Li1} proved that above conclusion is not true in general.

We show that if $Ay\| By $ in $\nu(\cdot)$ for some unit vector $y \in \mathcal{M}_{n\times 1}$ with $\nu(y)=1$ such that $\nu(Ay)=\| A\|_{\nu}$, then $A\| B $  in $\| \cdot\|_{\nu}$, but by \cite[Proposition 4.2 and Example 4.3]{Li1}, the converse is not true in general.

\section{Norm-parallelism in Hilbert $\mathbb{K(\mathcal{H})}$-modules}

Throughout, $\mathcal{E}$ denotes a Hilbert $K(\mathcal{H})$-module. We begin with a characterization of the norm-
parallelism for elements of $\mathcal{E}$.
 Let $\mathcal{F}$ be a Hilbert $C^*$-module and let $x, y \in \mathcal{F}$. Zamani and Moslehian \cite[Theorem 2.3]{Z2} proved that $x \| y$
 if and only if there exist a sequence of unit vectors ${\xi_{n}}$
 in a Hilbert space $\mathcal{H}$ and $\lambda \in \mathbb{T}$ such that
\begin{align*}
\lim_{n \to \infty}{\rm Re}[ \langle x, \lambda y\rangle \xi_{n}, \xi_{n} ]  = \left\| x \right\|\left\| y \right\|.
\end{align*}
Now suppose that $x, y \in \mathcal{E}$.
Following we show that $x \| y$ if and only if there exists a unit vector $\xi \in \mathcal{H}$ such that
$ |[ \langle x, x \rangle   \langle x, y \rangle \xi, \xi ]| =\| x\|^3 \| y\| $.
\begin{theorem}\label{a}
Let $x, y \in \mathcal{E}$. Then the following statements are equivalent:
\begin{enumerate}
\item[(i)] $x \| y$.
\item[(ii)] There exists a unit vector $\xi \in \mathcal{H}$ such that
\begin{align*}
 |[ \langle x, x \rangle   \langle x, y \rangle \xi, \xi ]| =\| x\|^3 \| y\|.
\end{align*}
\end{enumerate}
\end{theorem}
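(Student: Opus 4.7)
The plan is to reduce the theorem to the Zamani--Moslehian criterion recalled above, exploiting two features of the $K(\mathcal{H})$-module setting: compactness of the module inner products $\langle x,y\rangle,\langle x,x\rangle\in K(\mathcal{H})$, and the fact that for any $\xi\in\mathcal{H}$ the map $(u,v)\mapsto[\langle u,v\rangle\xi,\xi]$ is a positive semi-definite sesquilinear form on $\mathcal{E}$. Together these let me upgrade a sequence-of-unit-vectors condition to a single unit vector $\xi$ that is a common top eigenvector of both $\langle x,y\rangle$ and $\langle x,x\rangle$.

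For (i)$\Rightarrow$(ii), I apply the Zamani--Moslehian theorem to obtain unit vectors $\{\xi_{n}\}\subset\mathcal{H}$ and $\lambda\in\mathbb{T}$ with ${\rm Re}[T\xi_{n},\xi_{n}]\to\|x\|\|y\|$, where $T:=\langle x,\lambda y\rangle\in K(\mathcal{H})$. Since $|{\rm Re}[T\xi_{n},\xi_{n}]|\leq\|T\|\leq\|x\|\|y\|$, equality in the limit forces $\|T\|=\|x\|\|y\|$. By weak-compactness of the unit ball together with compactness of $T$ (so that $\xi_{n}\rightharpoonup\xi$ implies $T\xi_{n}\to T\xi$ in norm), a subsequence yields $\xi\in\mathcal{H}$ with ${\rm Re}[T\xi,\xi]=\|x\|\|y\|$; the chain $\|x\|\|y\|={\rm Re}[T\xi,\xi]\leq|[T\xi,\xi]|\leq\|T\xi\|\|\xi\|\leq\|T\|\|\xi\|^{2}\leq\|x\|\|y\|$ then forces $\|\xi\|=1$ and $T\xi=\|x\|\|y\|\,\xi$, equivalently $\langle x,y\rangle\xi=c\,\xi$ with $c:=\lambda\|x\|\|y\|$. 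In particular $|[\langle x,y\rangle\xi,\xi]|=\|x\|\|y\|$. Applying Cauchy--Schwarz to the positive semi-definite sesquilinear form $(u,v)\mapsto[\langle u,v\rangle\xi,\xi]$ on $\mathcal{E}$ then gives $\|x\|^{2}\|y\|^{2}=|[\langle x,y\rangle\xi,\xi]|^{2}\leq[\langle x,x\rangle\xi,\xi]\,[\langle y,y\rangle\xi,\xi]\leq\|x\|^{2}\|y\|^{2}$, so (assuming $y\neq 0$) $[\langle x,x\rangle\xi,\xi]=\|x\|^{2}=\|\langle x,x\rangle\|$. Positivity of $\langle x,x\rangle$ forces $\langle x,x\rangle\xi=\|x\|^{2}\xi$, whence $\langle x,x\rangle\langle x,y\rangle\xi=c\|x\|^{2}\xi$ and $|[\langle x,x\rangle\langle x,y\rangle\xi,\xi]|=|c|\|x\|^{2}=\|x\|^{3}\|y\|$.

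For the converse (ii)$\Rightarrow$(i), I chain $|[\langle x,x\rangle\langle x,y\rangle\xi,\xi]|=|[\langle x,y\rangle\xi,\langle x,x\rangle\xi]|\leq\|\langle x,y\rangle\xi\|\,\|\langle x,x\rangle\xi\|\leq\|x\|\|y\|\cdot\|x\|^{2}$. Equality throughout, which is forced by the hypothesis, yields both $\|\langle x,x\rangle\xi\|=\|x\|^{2}$ (and so $\langle x,x\rangle\xi=\|x\|^{2}\xi$ by the same positivity argument, applied to $A^{2}$ with $A=\langle x,x\rangle$) and $\|\langle x,y\rangle\xi\|=\|x\|\|y\|$ with $\langle x,y\rangle\xi$ parallel to $\xi$. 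It follows that $|[\langle x,y\rangle\xi,\xi]|=\|x\|\|y\|$; choosing $\lambda\in\mathbb{T}$ to rotate this scalar to a nonnegative real and applying the Zamani--Moslehian criterion with the constant sequence $\xi_{n}=\xi$ recovers $x\|y$.

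The main obstacle is the step in (i)$\Rightarrow$(ii) that turns the limiting Zamani--Moslehian condition into an \emph{attained} eigenvector equation $\langle x,y\rangle\xi=c\xi$ with $|c|=\|x\|\|y\|$: this is where compactness of $\langle x,y\rangle\in K(\mathcal{H})$ is essential, since weak-sequential extraction combined with norm continuity of compact operators is the mechanism that produces $\xi$. Once $\xi$ is in hand, the upgrade to $\langle x,x\rangle\xi=\|x\|^{2}\xi$ is a clean consequence of Cauchy--Schwarz applied to a positive semi-definite sesquilinear form, and the reverse implication is just a bookkeeping of the equality cases in two scalar Cauchy--Schwarz inequalities.
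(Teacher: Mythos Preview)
Your proof is correct but proceeds along a genuinely different route from the paper's. The paper argues indirectly through operator parallelism: from $x\|y$ it invokes \cite[Theorem~4.7]{Z1} to deduce $\langle x,x\rangle\|\langle x,y\rangle$ and $\|\langle x,y\rangle\|=\|x\|\|y\|$, then \cite[Theorem~2.4]{Z2} to pass to a Birkhoff--James orthogonality statement, and finally the proof of \cite[Theorem~2.10]{Z3} to extract the unit vector $\xi$ with $\|\langle x,x\rangle\xi\|=\|x\|^{2}$ and the desired inner-product identity; the converse is run backwards through the same citations. By contrast, you use only the sequential criterion \cite[Theorem~2.3]{Z2} and work directly in $\mathcal{H}$: compactness of $\langle x,y\rangle\in K(\mathcal{H})$ upgrades the limiting condition to an honest eigenvector $\langle x,y\rangle\xi=c\xi$, and then Cauchy--Schwarz for the positive semi-definite form $(u,v)\mapsto[\langle u,v\rangle\xi,\xi]$ forces $[\langle x,x\rangle\xi,\xi]=\|x\|^{2}$, hence $\langle x,x\rangle\xi=\|x\|^{2}\xi$. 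Your approach is more self-contained and makes transparent exactly where the hypothesis that the coefficient algebra is $K(\mathcal{H})$ (rather than an arbitrary $C^{*}$-algebra) enters, namely in the weak-to-norm continuity step; the paper's approach is shorter on the page because the analytic work is outsourced to \cite{Z1,Z2,Z3}, at the cost of requiring the reader to chase those references to see why a single $\xi$ (rather than a sequence) suffices.
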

\begin{proof}
$(i) \Rightarrow (ii)$: Let $x \| y$. We may assume that $x \neq 0$. From \cite[Theorem 4.7]{Z1} $\langle x, x\rangle \| \langle x, y\rangle $ and $\| \langle x, y \rangle\|  = \| x\| \| y\|$.
Theorem 2.4 of \cite{Z2} states that there exists $\lambda \in \mathbb{T}$ such that
\begin{align*}
\langle x, x\rangle  \perp_{B} \|\langle x, y\rangle\| \langle x, x\rangle + \lambda \| x\|^2 \langle x, y\rangle.
\end{align*}
By the proof of \cite[Theorem 2.10]{Z3}, there exists a unit vector $\xi \in \mathcal{H}$ such that $\|  \langle x, x\rangle\xi\| = \|  \langle x, x\rangle\|$ and
\begin{align*}
\bigg[ \Big(\|\langle x, y\rangle\| \langle x, x\rangle + \lambda \| x\|^2 \langle x, y\rangle \Big)\xi, \langle x, x\rangle \xi  \bigg] = 0.
\end{align*}
Thus $ | [ \langle x, x \rangle   \langle x, y \rangle \xi, \xi ] | =\| x\|^2 \| \langle x, y \rangle\|=\| x\|^3 \| y\|$.

$(ii) \Rightarrow(i)$: Suppose that $(ii)$ holds.
Then there exists a unit vector $\xi \in \mathcal{H}$ such that
\begin{align*}
\| x\|^3 \| y\| = | [ \langle x, x \rangle   \langle x, y \rangle \xi, \xi ] | &\leq \| x\| ^2\|\langle x, y\rangle\| \leq \| x\|^2\| x\| \| y\|.
\end{align*}
Hence $\| \langle x, y \rangle\|  = \| x\| \| y\|$ and
\begin{align*}
 |[ \langle x, x \rangle   \langle x, y \rangle \xi, \xi ]| = \| x\|^2 \| \langle x, y \rangle\|=\| \langle x, x \rangle\| \| \langle x, y \rangle\|.
\end{align*}
By \cite[Theorem 2.10]{Z3}, we have $\langle x, x\rangle  \| \langle x, y\rangle $.
Then \cite[Theorem 4.7]{Z1} implies that  $x \| y$.
\end{proof}

\begin{lemma}\cite[Corollary 2.5]{Z2}\label{l1}
Let $x, y \in \mathcal{E}$. If $x \| y$, then there exists $\lambda \in \mathbb{T}$ such that

 $x \perp_{B} (\| y\| \langle x, x \rangle x + \lambda\| x\| \langle y, x \rangle x)$   and   $y \perp_{B} (\|x\| \langle y, y \rangle y + \lambda\| y\|    \langle y, x \rangle y)$.
\end{lemma}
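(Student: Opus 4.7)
The plan is to produce a single unit vector $\xi\in\mathcal{H}$ which is a simultaneous eigenvector of the four operators $\langle x,x\rangle$, $\langle y,y\rangle$, $\langle x,y\rangle$, $\langle y,x\rangle$, and then to choose $\lambda\in\mathbb{T}$ so that the operator coefficients appearing in both claimed Birkhoff--James relations annihilate $\xi$. The desired inequalities then follow by lower-bounding the operator norm of a positive operator by its value on $\xi$.

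Since $x\|y$, choose $\mu\in\mathbb{T}$ with $\|x+\mu y\|=\|x\|+\|y\|$. The positive element $\langle x+\mu y,x+\mu y\rangle$ lies in $K(\mathcal{H})$, and being positive and compact it attains its operator norm $(\|x\|+\|y\|)^{2}$ at some unit eigenvector $\xi\in\mathcal{H}$. Expanding
\[
[\langle x+\mu y,x+\mu y\rangle\xi,\xi]=[\langle x,x\rangle\xi,\xi]+2\mathrm{Re}(\bar\mu[\langle x,y\rangle\xi,\xi])+[\langle y,y\rangle\xi,\xi]=(\|x\|+\|y\|)^{2},
\]
and combining the bounds $[\langle x,x\rangle\xi,\xi]\leq\|x\|^{2}$, $[\langle y,y\rangle\xi,\xi]\leq\|y\|^{2}$, and $|[\langle x,y\rangle\xi,\xi]|\leq\|\langle x,y\rangle\|\leq\|x\|\|y\|$, every inequality must saturate. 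Together with the equality case of the Cauchy--Schwarz inequality, this forces
\[
\langle x,x\rangle\xi=\|x\|^{2}\xi,\quad\langle y,y\rangle\xi=\|y\|^{2}\xi,\quad\langle x,y\rangle\xi=\mu\|x\|\|y\|\xi,\quad\langle y,x\rangle\xi=\bar\mu\|x\|\|y\|\xi.
\]

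Set $\lambda=-\mu\in\mathbb{T}$ and define $a=\|y\|\langle x,x\rangle+\lambda\|x\|\langle y,x\rangle$ and $b=\|x\|\langle y,y\rangle+\lambda\|y\|\langle y,x\rangle$ in $K(\mathcal{H})$. A direct substitution gives $a\xi=\|x\|^{2}\|y\|(1+\lambda\bar\mu)\xi=0$ and $a^{*}\xi=\|x\|^{2}\|y\|(1+\bar\lambda\mu)\xi=0$, and symmetrically $b\xi=b^{*}\xi=0$. Hence, for every $\sigma\in\mathbb{C}$,
\[
\|x+\sigma ax\|^{2}=\|(I+\sigma a)\langle x,x\rangle(I+\sigma a)^{*}\|\geq[\langle x,x\rangle(I+\sigma a)^{*}\xi,(I+\sigma a)^{*}\xi]=\|x\|^{2},
\]
yielding the first Birkhoff--James relation $x\perp_{B}ax$; the parallel calculation for $b$ and $y$ (which uses the \emph{same} $\xi$, since it already diagonalizes $\langle y,y\rangle$ and $\langle y,x\rangle$) gives $y\perp_{B}by$ with the \emph{same} $\lambda=-\mu$.

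The principal subtlety is that the two orthogonalities must share a common $\lambda$: this is why we extract a single simultaneous eigenvector $\xi$ of all four operators rather than treating the two assertions independently. Compactness of $\langle x+\mu y,x+\mu y\rangle$---a feature of the $K(\mathcal{H})$-module setting---is what makes the norm-attainment step, and hence the exact cancellation $1+\lambda\bar\mu=0$, available; without it one would only obtain an approximate version of the conclusion.
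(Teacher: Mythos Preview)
Your argument is correct. The paper itself does not prove this lemma; it simply quotes it from \cite[Corollary~2.5]{Z2}, where the result is established for arbitrary Hilbert $C^*$-modules. Your proof is genuinely different from a mere citation: you give a self-contained argument tailored to the $K(\mathcal{H})$-module setting, exploiting that the positive compact operator $\langle x+\mu y,\,x+\mu y\rangle$ attains its norm at an eigenvector $\xi$. Saturation of the obvious inequalities then forces $\xi$ to be a simultaneous eigenvector of $\langle x,x\rangle$, $\langle y,y\rangle$, $\langle x,y\rangle$, $\langle y,x\rangle$, and the choice $\lambda=-\mu$ makes both operator coefficients kill $\xi$ and $\xi$ under the adjoint, from which the Birkhoff--James inequalities drop out by evaluating the positive operator $(I+\sigma a)\langle x,x\rangle(I+\sigma a)^{*}$ at $\xi$. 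All steps check, including the equality-case arguments (for the positive operators $\|x\|^{2}I-\langle x,x\rangle$ and $\|y\|^{2}I-\langle y,y\rangle$, and Cauchy--Schwarz in $\mathcal{H}$ for $\langle x,y\rangle\xi$), and you correctly observe that a single $\lambda$ works for both orthogonalities.

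The trade-off is scope: the cited result in \cite{Z2} holds over an arbitrary $C^*$-algebra, whereas your norm-attainment step genuinely needs compactness of the inner products. Since the present paper fixes $\mathcal{E}$ to be a Hilbert $K(\mathcal{H})$-module throughout, your proof is entirely adequate here and has the advantage of being elementary and explicit about the common $\lambda$; you have also, in effect, reproduced the mechanism behind equation~\eqref{0} and the simultaneous-eigenvector idea that the paper later uses in Theorems~\ref{L} and~\ref{b}.
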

Now, we apply Lemma \ref{l1} to obtain the following characterization of the norm-parallelism for elements of a Hilbert $K(\mathcal{H})$-module.

\begin{corollary}
Let $x, y \in \mathcal{E}$ and $ \langle x, x\rangle $ be idempotent. Then the following statements are equivalent:
\begin{enumerate}
\item[(i)] $x \| y$;
\item[(ii)] $x \perp_{B} (\| y\| \langle x, x \rangle x + \lambda\| x\| \langle y, x \rangle x)$   and   $y \perp_{B} (\|x\| \langle y, y \rangle y + \lambda\| y\|    \langle y, x \rangle y)$
\end{enumerate}
for some $\lambda \in \mathbb{T}$.
\begin{proof}
$(i) \Rightarrow(ii)$: This implication follows immediately from Lemma \ref{l1}.

$(ii) \Rightarrow(i)$: Suppose that $(ii)$ holds. By \cite[Lemma 2.1 part(5)
]{AR1}, we have
\begin{align*}
\langle x, x \rangle \perp_{B} \Big(\| y\| \langle x, x \rangle^2 + \bar{\lambda}\| x\|  \langle x, x \rangle\langle x, y \rangle \Big).
\end{align*}
By \cite[Theorem 2.10]{Z3}, there exists a vector $\xi$ in  $\mathcal{H}$ such that
\begin{align*}
\bigg[ \Big(\| y\| \langle x, x \rangle^2 + \bar{\lambda}\| x\| \langle x, x\rangle\langle x, y \rangle\Big)\xi ,\langle x, x \rangle\xi\bigg] = 0
\end{align*}
and
\begin{align*}
\| \langle x, x \rangle\xi\| = \| x\|^2.
\end{align*}
 Since $\langle x, x \rangle$ is idempotent, we reach
\begin{align*}
|[ \langle x, x \rangle   \langle x, y \rangle \xi, \xi ]| = \| x\|^3 \| y\|.
\end{align*}
From $(ii) \Rightarrow(i)$ of Theorem \ref{a}, we obtain $x \| y$.
\end{proof}
\end{corollary}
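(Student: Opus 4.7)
The plan is to handle the two implications separately. The direction $(i) \Rightarrow (ii)$ is essentially a quotation: it is exactly the content of Lemma \ref{l1}, so no work is needed beyond pointing to it. All the action is in $(ii) \Rightarrow (i)$, and here my strategy is to use the hypothesis to reduce to the numerical criterion provided by Theorem \ref{a}(ii), namely the existence of a unit vector $\xi \in \mathcal{H}$ such that $|[\langle x,x\rangle \langle x,y\rangle \xi,\xi]| = \|x\|^3\|y\|$.

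First I would convert the Birkhoff--James orthogonality that appears in (ii) on the module $\mathcal{E}$ into a corresponding orthogonality in the $C^*$-algebra $K(\mathcal{H})$. The natural tool is a module-to-algebra transfer principle of the form used in \cite[Lemma 2.1(5)]{AR1}: pairing the first orthogonality of (ii) with $x$ on the inner product side yields
\begin{equation*}
\langle x,x\rangle \perp_B \bigl(\|y\|\langle x,x\rangle^{2} + \bar{\lambda}\|x\|\langle x,x\rangle\langle x,y\rangle\bigr).
\end{equation*}
(I only need the first of the two orthogonalities in (ii); the second would give the analogous statement with the roles reversed, which is not required here.)

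Next, I would apply \cite[Theorem 2.10]{Z3} to realize this algebra-level Birkhoff--James orthogonality by a numerical orthogonality: there is a unit vector $\xi \in \mathcal{H}$ with $\|\langle x,x\rangle\xi\| = \|\langle x,x\rangle\| = \|x\|^{2}$ and
\begin{equation*}
\bigl[\bigl(\|y\|\langle x,x\rangle^{2} + \bar{\lambda}\|x\|\langle x,x\rangle\langle x,y\rangle\bigr)\xi,\; \langle x,x\rangle\xi\bigr] = 0.
\end{equation*}
This is the step where the idempotency hypothesis on $\langle x,x\rangle$ becomes decisive. Because $\langle x,x\rangle$ is both positive and idempotent it is a projection, so $\langle x,x\rangle^{2}=\langle x,x\rangle$ and $\langle x,x\rangle$ is self-adjoint. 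Expanding the displayed bracket, the first summand contributes $\|y\|\,\|\langle x,x\rangle\xi\|^{2} = \|y\|\|x\|^{4}$, while in the second summand the self-adjoint projection can be absorbed on the right so that one obtains $\bar{\lambda}\|x\|\,[\langle x,x\rangle \langle x,y\rangle \xi,\xi]$. Taking moduli and dividing by $\|x\|$ yields $|[\langle x,x\rangle \langle x,y\rangle \xi,\xi]| = \|x\|^{3}\|y\|$, which is exactly condition (ii) of Theorem \ref{a}, and I conclude $x\|y$.

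The main obstacle I anticipate is the bookkeeping in the last step: correctly using $\langle x,x\rangle^{2}=\langle x,x\rangle$ and the self-adjointness of $\langle x,x\rangle$ to collapse the bracket expression into $|[\langle x,x\rangle \langle x,y\rangle \xi,\xi]|$ with the right power of $\|x\|$. Everything else is a direct invocation of the cited results. A subtle point worth noting is that idempotency in a $C^{*}$-algebra context here really means being a self-adjoint projection (which is automatic once one uses that $\langle x,x\rangle$ is positive), and this is what allows $\|\langle x,x\rangle\xi\|^{2}$ to be identified cleanly with $\|x\|^{4}$ without any cross terms spoiling the estimate.
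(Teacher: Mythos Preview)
Your proposal is correct and follows essentially the same route as the paper's proof: both invoke Lemma~\ref{l1} for $(i)\Rightarrow(ii)$, then for $(ii)\Rightarrow(i)$ pass to an algebra-level Birkhoff--James orthogonality via \cite[Lemma~2.1(5)]{AR1}, realize it numerically via \cite[Theorem~2.10]{Z3}, use idempotency of $\langle x,x\rangle$ to simplify, and conclude with Theorem~\ref{a}. Your write-up actually spells out the bracket computation (including the role of self-adjointness of the projection $\langle x,x\rangle$) more explicitly than the paper does.
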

Recall that if $x$ is an element of $\mathcal{E}$ such that $\langle x, x \rangle$ is a minimal projection, 
 then $\langle x, x \rangle x = x$; see \cite{Bak}.
 Let $ x, y\in \mathcal{E}$.  If there exist a unit vector $\xi \in \mathcal{H}$ and $\lambda \in \mathbb{T}$ such that 
 ${\rm Re} [ \langle x,\lambda y \rangle \xi, \xi ] =\| x\| \| y\|$,  then \cite[Theorem 2.3]{Z2}
 implies that $x \| y$. The question is under which conditions the converse is true. When $\langle x, x \rangle$ is a minimal projection, by using Theorem \ref{a}, we get $x \| y$, if and only if there exists a unit vector $\xi\in \mathcal{H}$ such that $\Big| [ \langle x, y  \rangle \xi, \xi ] \Big| = \| y\|.$ Now we prove it by a different approach. 

\begin{theorem} \label{L}
Let $ x, y\in \mathcal{E}$. If $\langle x, x \rangle$ is a minimal projection, then $ x \| y$  if and only if there exists a unit vector $\xi\in \mathcal{H}$ such that
\begin{align*}
 \Big| [ \langle x, y  \rangle \xi, \xi ] \Big| = \| y\|.
\end{align*}
\begin{proof}
Let $x \| y$. By Lemma \ref{l1}, there exists a scalar $\mu$ in $\mathbb{T}$ such that
\begin{align*}
x \perp_{B} \Big(\| y\| \langle x, x \rangle x + \mu\| x\| \langle y, x \rangle x\Big).
\end{align*}
Since $\langle x, x \rangle$ is a minimal projection, by \cite[Proposition 2.4]{AR1}, we have
\begin{align*}
\langle x, x \rangle\Big\langle  \Big(\| y\| \langle x, x \rangle x + \mu   \langle y, x \rangle x\Big), x \Big\rangle = 0,
\end{align*}
and so
\begin{align*}
\| y\| \langle x, x \rangle + \mu \langle x, x \rangle  \langle y, x \rangle \langle x, x \rangle   = 0.
\end{align*}
Hence
\begin{align}
\langle x, x \rangle  \langle x, y \rangle \langle x, x \rangle   = - \bar{\mu}  \| y\| \langle x, x \rangle.\label{0}
\end{align}
Now, by knowing the fact that $\langle x, x \rangle = \xi \otimes \xi$ for some unit vector $\xi \in \mathcal{H}$, since $\xi \otimes \xi  \langle x, y \rangle \xi \otimes \xi =  [\langle x, y \rangle \xi, \xi ]\xi \otimes \xi,$  we obtain
\begin{align*}
\Big|[ \langle x, y \rangle \xi, \xi ]\Big|\| \xi \otimes \xi\| = \| y\| \| \xi \otimes \xi\|,
\end{align*}
whence
\begin{align*}
\Big|[ \langle x, y \rangle \xi, \xi ]\Big| = \| y\|.
\end{align*}
$(ii) \Rightarrow(i)$: Let $\Big|[ \langle x, y \rangle \xi, \xi ]\Big| = \| y\|.$
Then there exists $\lambda \in \mathbb{T}$ such that
\begin{align*}
{\rm Re}[ \langle x, \lambda y\rangle \xi, \xi]  = \| x\|\left\| y \right\|.
\end{align*}
 From  \cite[Theorem 2.3]{Z2}, we get $x\| y$.
\end{proof}
\end{theorem}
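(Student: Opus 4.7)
The hypothesis that $\langle x, x \rangle$ is a minimal projection means $\langle x, x \rangle = \eta \otimes \eta$ for some unit vector $\eta \in \mathcal{H}$, and in particular $\|x\| = 1$. My approach is to deduce the equivalence directly from Theorem~\ref{a}, which under $\|x\| = 1$ specializes to: $x \| y$ iff there is a unit vector $\xi \in \mathcal{H}$ with $|[\langle x, x \rangle \langle x, y\rangle \xi, \xi]| = \|y\|$. The only remaining work is to remove the extra factor $\langle x, x \rangle$.

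For the forward direction, Theorem~\ref{a} supplies such a unit vector $\xi$. Using the identity $(\eta \otimes \eta)\zeta = [\zeta, \eta]\eta$, the expression rewrites as $|[\langle x, y\rangle \xi, \eta]\,[\eta, \xi]| = \|y\|$. Since $|[\langle x, y\rangle \xi, \eta]| \leq \|\langle x, y\rangle\| \leq \|x\|\|y\| = \|y\|$ and $|[\eta, \xi]| \leq 1$ by Cauchy--Schwarz, equality forces $|[\eta, \xi]| = 1$, so $\xi$ is a unimodular multiple of $\eta$. Substituting back yields $|[\langle x, y\rangle \eta, \eta]| = \|y\|$, establishing (ii) with the explicit choice $\xi = \eta$.

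For the converse, given $|[\langle x, y\rangle \xi, \xi]| = \|y\|$, pick $\lambda \in \mathbb{T}$ so that $\lambda\,[\langle x, y\rangle \xi, \xi]$ is a nonnegative real. Then $\operatorname{Re}[\langle x, \bar{\lambda} y\rangle \xi, \xi] = \|y\| = \|x\|\|y\|$, and the Zamani--Moslehian criterion \cite[Theorem~2.3]{Z2}, recalled in the introduction, delivers $x \| y$ at once.

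The main technical point I expect is the Cauchy--Schwarz equality analysis in the forward direction, which must convert the weighted inner product into a single vector expectation. An alternative route avoiding Theorem~\ref{a} would begin from Lemma~\ref{l1}: the Birkhoff--James relation $x \perp_B (\|y\|\langle x, x\rangle x + \mu \|x\| \langle y, x\rangle x)$, combined with the characterization of Birkhoff--James orthogonality that is available when $\langle x, x\rangle$ is a minimal projection, would translate into a multiplicative identity of the form $\langle x, x\rangle \langle x, y\rangle \langle x, x\rangle = -\bar{\mu}\|y\| \langle x, x\rangle$; sandwiching with $\eta$ then directly produces $|[\langle x, y\rangle \eta, \eta]| = \|y\|$ without invoking Cauchy--Schwarz.
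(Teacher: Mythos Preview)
Your argument is correct. For the converse you and the paper do exactly the same thing: rotate by a unimodular scalar and invoke \cite[Theorem~2.3]{Z2}. For the forward implication, however, you and the paper swap roles. Your primary route---specialize Theorem~\ref{a} to $\|x\|=1$, write $\langle x,x\rangle=\eta\otimes\eta$, expand $[\langle x,x\rangle\langle x,y\rangle\xi,\xi]=[\langle x,y\rangle\xi,\eta]\,[\eta,\xi]$, and use Cauchy--Schwarz equality to force $\xi\in\mathbb{T}\eta$---is precisely the approach the paper alludes to in the paragraph preceding the theorem (``by using Theorem~\ref{a}, we get \dots'') but deliberately sets aside in favor of ``a different approach.'' That different approach is in turn exactly your sketched alternative: start from Lemma~\ref{l1}, convert the Birkhoff--James relation via \cite[Proposition~2.4]{AR1} (the characterization valid when $\langle x,x\rangle$ is minimal) into the operator identity $\langle x,x\rangle\langle x,y\rangle\langle x,x\rangle=-\bar{\mu}\,\|y\|\langle x,x\rangle$, and then read off $|[\langle x,y\rangle\eta,\eta]|=\|y\|$ from $(\eta\otimes\eta)\langle x,y\rangle(\eta\otimes\eta)=[\langle x,y\rangle\eta,\eta]\,\eta\otimes\eta$. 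The trade-off is that your main route leans on Theorem~\ref{a} (itself proved through \cite{Z1,Z2,Z3}) and a small equality-case analysis, while the paper's route is self-contained modulo Lemma~\ref{l1} and the cited orthogonality criterion, and moreover produces the operator identity~(\ref{0}) that gets reused in Theorem~\ref{b} and Corollary~\ref{transitive}.
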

In the following theorem we use some ideas of \cite{Z2}.

\begin{theorem}\label{b}
Let $\{x_{\alpha}\}_{\alpha \in \Lambda}^\xi$ be an orthonormal basis for $\mathcal{E}$ 
such that as a Hilbert module is not of dimension one.
 Then there is no nonzero element $x \in \mathcal{E}$ such that $x \| x_{\alpha}$ for all $\alpha\in \Lambda$.
\begin{proof}
 Let there be a nonzero element $ x \in \mathcal{E}$ such that $x \| x_{\alpha}$ for all $\alpha\in \Lambda$. By \cite[Corollary 2.5]{Z2}, there exists a sequence $\{\lambda_\alpha\}$ in $\mathbb{T}$ such that
\begin{align}
x_{\alpha} \perp_{B} \Big(\|x\| \langle x_{\alpha}, x_{\alpha} \rangle x_{\alpha} + \lambda_\alpha   \langle x_{\alpha}, x \rangle x_{\alpha}\Big)\,\,\,\,\,(\alpha\in \Lambda).\label{1}
\end{align}
 Taking $E = \langle x_{\alpha}, x_{\alpha} \rangle$, since $E x_\alpha = E,$ equation \eqref{0} implies that
\begin{align*}
\langle x_{\alpha}, x \rangle  E = E \langle x_{\alpha}, x \rangle  E= -\overline{\lambda_\alpha}\|x\| E\,\,\,\,\,(\alpha\in \Lambda).
\end{align*}
Hence
\begin{align}
(\langle x_{\alpha}, x \rangle  E)^* (\langle x_{\alpha}, x \rangle  E) = \lambda_\alpha \overline{\lambda_\alpha}\| x\|^2 E^2 = \| x\|^2 E\label{3}
\end{align}
or equivalently 
\begin{align*}
E\langle x, x_{\alpha} \rangle\langle x_{\alpha}, x \rangle  E = \| x\|^2 E\,\,\,\,\,(\alpha\in \Lambda).
\end{align*}
Therefore $[ \langle x, x_{\alpha} \rangle \langle x_{\alpha}, x \rangle \xi, \xi ] = \| x\|^2$
and
\begin{align*}
[ \langle x, x  \rangle \xi, \xi  ] = \Big[\sum_{\alpha\in \Lambda} \langle x, x_{\alpha}  \rangle  \langle x_{\alpha}, x  \rangle\xi, \xi  \Big] = \sum_{\alpha\in \Lambda}\|x\| ^2.
\end{align*}
Thus
\begin{align*}
\sum_{\alpha\in \Lambda}\|x\| ^2 = [ \langle x, x  \rangle \xi, \xi ] = \Big|[ \langle x, x  \rangle \xi, \xi ]\Big| \leq \| [ \langle x, x  \rangle \xi, \xi ]\| \leq \| x\|^2,
\end{align*}
which implies that $x =0,$ that is a contradiction.
\end{proof}
\end{theorem}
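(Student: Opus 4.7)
The plan is to argue by contradiction: assuming some nonzero $x \in \mathcal{E}$ were norm-parallel to every basis vector, I will convert each parallelism into a scalar equality and sum the resulting equalities via Parseval's identity to obtain a quantitative obstruction incompatible with orthogonal dimension at least two.

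First, for each $\alpha \in \Lambda$, I would apply Lemma \ref{l1} to the pair $x_\alpha \| x$ to produce a phase $\lambda_\alpha \in \mathbb{T}$ realizing $x_\alpha \perp_{B} (\|x\|\langle x_\alpha, x_\alpha\rangle x_\alpha + \lambda_\alpha \langle x_\alpha, x\rangle x_\alpha)$. Setting $E = \langle x_\alpha, x_\alpha\rangle = \xi \otimes \xi$ and mimicking the computation that produced equation \eqref{0} in the proof of Theorem \ref{L} -- pair with $x_\alpha$, invoke \cite[Proposition 2.4]{AR1}, and use that $E$ is a minimal projection -- I would pass from this Birkhoff--James relation to $\langle x_\alpha, x\rangle E = -\overline{\lambda_\alpha}\|x\| E$; the standard fact $E x_\alpha = x_\alpha$ from \cite{Bak} is what upgrades the natural two-sided sandwich $E\langle x_\alpha, x\rangle E$ to this one-sided product. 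Taking $(\cdot)^{*}(\cdot)$ now collapses the unit-modulus phase and yields $E \langle x, x_\alpha\rangle \langle x_\alpha, x\rangle E = \|x\|^2 E$. Using the elementary identity $(\xi \otimes \xi) A (\xi \otimes \xi) = [A\xi, \xi](\xi \otimes \xi)$, this reads $[\langle x, x_\alpha\rangle \langle x_\alpha, x\rangle \xi, \xi] = \|x\|^2$ for every $\alpha \in \Lambda$.

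Finally I would invoke Parseval's identity for the orthonormal basis of the Hilbert $K(\mathcal{H})$-module \cite{Bak}, namely $\langle x, x\rangle = \sum_{\alpha\in\Lambda} \langle x, x_\alpha\rangle \langle x_\alpha, x\rangle$. Evaluating at $\xi$ and substituting the per-$\alpha$ equalities gives $\sum_{\alpha \in \Lambda} \|x\|^2 = [\langle x, x\rangle \xi, \xi] \leq \|\langle x, x\rangle\| = \|x\|^2$. The hypothesis that $\mathcal{E}$ is not of dimension one forces $|\Lambda| \geq 2$, so the left-hand side is at least $2\|x\|^2$, which is impossible unless $x = 0$.

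The delicate step, and in my view the only genuine obstacle, is the passage from the symmetric identity $E \langle x_\alpha, x\rangle E = -\overline{\lambda_\alpha}\|x\| E$ -- which is the direct output of the calculation behind \eqref{0} -- to the asymmetric $\langle x_\alpha, x\rangle E = -\overline{\lambda_\alpha}\|x\| E$ needed to form a clean adjoint product. Keeping the left-module conventions straight and using $x_\alpha = E x_\alpha$ to drop the left-hand $E$ is where care is required; once that is in place, everything else is either an application of Theorem \ref{L}'s mechanics or a one-line Parseval estimate.
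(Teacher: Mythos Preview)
Your proposal is correct and is essentially the paper's own argument, step for step: obtain the Birkhoff--James relation from Lemma~\ref{l1}, pass via \eqref{0} and $E x_\alpha = x_\alpha$ to $\langle x_\alpha, x\rangle E = -\overline{\lambda_\alpha}\|x\|E$, square to $E\langle x, x_\alpha\rangle\langle x_\alpha, x\rangle E = \|x\|^2 E$, then sum over $\alpha$ using Parseval and compare with $[\langle x,x\rangle\xi,\xi]\le\|x\|^2$. Your identification of the ``drop the left $E$'' step as the only delicate point, and your justification $E\langle x_\alpha, x\rangle = \langle E x_\alpha, x\rangle = \langle x_\alpha, x\rangle$, are exactly what the paper uses.
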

Now we have the following characterization of the transitive relation for elements of a certain Hilbert $K(\mathcal{H})$-module.

\begin{corollary}\label{transitive}
Let $x, y, z\in \mathcal{E}$ be such that $\mathcal{E}$ as a Hilbert module is of dimension one with orthonormal basis $\{y\}^\xi$. If  $x \| y$ and $y\| z$, then $x\| z$.
\begin{proof}
Let $x \| y$ and $y\| z$. By \cite[Corollary 2.5]{Z2}, there exist $\lambda,\,\, \mu \in\mathbb{T}$ such that $y \perp_{B} (\|x\| \langle y, y \rangle y + \lambda\| y\|    \langle y, x \rangle y)$ and $y \perp_{B} (\| z\| \langle y, y \rangle y + \mu\| y\| \langle z, y \rangle y).$ Taking $E = \langle y, y \rangle$.
Equation (\ref{0}) ensures that $E \langle x, y \rangle = -\bar{\lambda}\|x\| E$ and $\langle y, z \rangle  E= -\bar{\mu}\|z\| E$.
Then $E\langle x, y \rangle \langle y, z \rangle  E = \bar{\lambda}\bar{\mu}\left\|x \right\| \left\|z \right\| E$. Therefore
\begin{align*}
\Big|[ \langle x, z  \rangle \xi, \xi ] \Big|= \Big|[  \langle x, y  \rangle  \langle y, z  \rangle\xi, \xi ]\Big| = \left\| x \right\| \left\| z \right\|.
\end{align*}
That is  $x\Arrowvert z$.
\end{proof}
\end{corollary}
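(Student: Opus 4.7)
The plan is to reduce the problem to evaluating a single numerical pairing with the distinguished unit vector $\xi$ coming from the basis $\{y\}^\xi$, and then to invoke the Zamani--Moslehian characterization \cite[Theorem~2.3]{Z2} recalled in the introduction. The first step: since $\langle y,y\rangle=\xi\otimes\xi$ is a minimal projection, Theorem~\ref{L} is directly available with $y$ in the role of~$x$. Applying it to $y\|x$ and to $y\|z$ (legitimate because $\|$ is symmetric) yields
\[
\bigl|[\langle y,x\rangle\xi,\xi]\bigr|=\|x\|\qquad\text{and}\qquad \bigl|[\langle y,z\rangle\xi,\xi]\bigr|=\|z\|.
\]

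Next I would upgrade these scalar equalities to vector ones. For $w\in\{x,z\}$, the chain
\[
\bigl|[\langle y,w\rangle\xi,\xi]\bigr|\le\|\langle y,w\rangle\xi\|\le\|\langle y,w\rangle\|\le\|y\|\|w\|=\|w\|
\]
must collapse to equality throughout, and equality in Cauchy--Schwarz then forces $\langle y,w\rangle\xi=c_w\xi$ for some scalar $c_w$ with $|c_w|=\|w\|$. Writing $\langle y,x\rangle\xi=\alpha\xi$ and $\langle y,z\rangle\xi=\beta\xi$, with $|\alpha|=\|x\|$ and $|\beta|=\|z\|$, and taking the adjoint of the first, I get also $\langle x,y\rangle\xi=\bar\alpha\xi$.

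Now I would invoke the Fourier expansion available in the one-dimensional module $\mathcal{E}$: every element $w\in\mathcal{E}$ satisfies $w=\langle w,y\rangle y$, and consequently $\langle x,z\rangle=\langle\langle x,y\rangle y,z\rangle=\langle x,y\rangle\langle y,z\rangle$. Evaluating this operator at $\xi$ and using the identities above,
\[
\langle x,z\rangle\xi=\langle x,y\rangle(\beta\xi)=\beta\langle x,y\rangle\xi=\beta\bar\alpha\,\xi,
\]
so $\bigl|[\langle x,z\rangle\xi,\xi]\bigr|=|\beta||\alpha|=\|x\|\|z\|$. Picking $\lambda\in\mathbb{T}$ so that $\bar\lambda\beta\bar\alpha$ is a positive real then makes $\mathrm{Re}[\langle x,\lambda z\rangle\xi,\xi]=\|x\|\|z\|$, and the constant sequence $\xi_n=\xi$ in \cite[Theorem~2.3]{Z2} delivers $x\|z$.

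The real obstacle is conceptual rather than computational: Theorem~\ref{b} shows that once the orthonormal basis has two or more elements no nonzero vector can be norm-parallel to every basis vector, so any transitivity argument routed through $y$ must live inside the single rank-one projection $E=\xi\otimes\xi$. The step that decisively uses one-dimensionality is the Cauchy--Schwarz extraction, which converts the numerical content of Theorem~\ref{L} into a genuine vector identity $\langle y,w\rangle\xi\in\mathbb{C}\xi$; without this upgrade the factorization $\langle x,z\rangle=\langle x,y\rangle\langle y,z\rangle$ could not be composed into a single scalar of modulus $\|x\|\|z\|$.
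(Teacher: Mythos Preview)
Your proof is correct and follows essentially the same strategy as the paper's: both use the one-dimensional Fourier expansion $\langle x,z\rangle=\langle x,y\rangle\langle y,z\rangle$ and then verify $\bigl|[\langle x,z\rangle\xi,\xi]\bigr|=\|x\|\|z\|$ to conclude via \cite[Theorem~2.3]{Z2}. The only cosmetic difference is that you obtain the scalar identities $\langle y,w\rangle\xi\in\mathbb{C}\xi$ from Theorem~\ref{L} together with the equality case of Cauchy--Schwarz, whereas the paper invokes Lemma~\ref{l1} and equation~\eqref{0} directly; just note that your phrase ``taking the adjoint'' should be read as repeating the Cauchy--Schwarz equality argument for $\langle x,y\rangle$, since an eigenvector of $T$ is not automatically an eigenvector of $T^{*}$.
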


\begin{example}
Suppose that $\mathcal{H}$ is an infinite dimensional Hilbert space. Then $\mathcal{H}$ equipped with
 $\langle x, y \rangle = x \otimes y $ is a Hilbert $K(\mathcal{H})$-module and
 $\|x\|^2=\|x \otimes x\|$ is the original norm on $\mathcal{H}$.
We can assume that each unit vector $y \in \mathcal{H}$ gives an orthonormal basis $\{y\}^y$ for $\mathcal{H}$.
Therefore the dimension of $H$ as a Hilbert module is equal to one.
Finally, from Corollary \ref{transitive}, we see that if $x \| y $ and $y \| z$, for some unit vector $y$, then
\begin{align*}
\Big|[ \langle x, z  \rangle y, y ]\Big|  = \| x\| \| z\|.
\end{align*}
Therefore, $x \|z $.
\end{example}

\section{Norm-parallelism in matrices}\label{s3}

Let $\mathcal{M}_{n\times n}$ be the space of $n \times n$ complex matrices, let $A, B \in \mathcal{M}_{n\times n}$,
 and let $\| \cdot\|$ be any norm on $\mathcal{M}_{n\times n}$.  We say that $A$ is norm-parallel to $B$ if
\begin{align*}
 \| A + \lambda B\| = \| A\| + \| B\|
\end{align*}
for some $\lambda \in \mathbb{T}$.
We also consider the inner product $\langle A, B  \rangle ={\rm tr}(AB^*)$.
 The dual norm of $\| \cdot\|$ is defined by
\begin{align*}
\| A\|^D = \max\{|\langle A, B  \rangle|: \| B\|\ \leq 1\}.
\end{align*}
 The singular values of $A \in \mathcal{M}_{n\times n}$, which are the  square roots of the eigenvalues of the matrix $A^*A$,
 are denoted by $s_{1}(A)\geq \cdots  \geq s_{n}(A) $ and  the Schatten $p$-norm of  $A$ is defined by
 \begin{align*}
\|A\|_p = \bigg\{\sum_{i = 1}^ns_{i}(A)^p\bigg\}^\frac{1}{p},
\end{align*}
if $1\leq p< \infty$ and if $p = \infty,~~~\|A\|_p = \max \{s_{j}(A):~~~ 1\leq j \leq n \}$; see  \cite{Li1}.
If $A \in \mathcal{M}_{n\times n}$, then for a positive integer $1\leq k\leq n$, the Ky-Fan $k$-norm is defined by
\begin{align*}
\| A\|_{(k)} = \sum_{j = 1}^ks_{j}(A),
\end{align*}
where $s_{1}(A)\geq \cdots  \geq s_{n}(A) \geq 0 $, are the singular values of $A$. The cases when $k = 1$ and $k = n$ correspond to the operator norm  $\| \cdot\|_{\infty}$ and the trace norm $\| \cdot\|_{1}$, respectively. we have the following result.

 \begin{proposition}\label{p2}
 Let $A, B \in \mathcal{M}_{n\times n}$. Then the  following statement are equivalent:

 \begin{enumerate}
\item[(i)] $ A \| B $, in $\|\cdot\|_{\infty}$.

\item[(ii)] There exist unit vectors  $x \in \mathcal{M}_{n\times 1}$ and $y \in \mathcal{M}_{n\times 1}$ such that $\|A\|_{\infty}=x^*Ay$ and
$| x^*By|= \|B\|_{\infty}$.
\item[(iii)] For any $U \in \mathcal{M}_{n\times n} $ with orthonormal columns that form a basis for the eigenspace of $AA^*$
corresponding to the largest eigenvalue and $V = \frac{1}{\|A\|_{\infty}}A^*U\in  \mathcal{M}_{n\times n}$, it holds that
 \begin{align*}
-\|B\|_{\infty}\in W(\lambda U^*BV)
\end{align*}
for some $\lambda \in \mathbb{T}$.
\end{enumerate}
\end{proposition}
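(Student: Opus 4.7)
I would prove the three implications $(i)\Rightarrow(ii)\Rightarrow(iii)\Rightarrow(ii)\Rightarrow(i)$, with $(i)\Leftrightarrow(ii)$ being a direct unpacking of the norm-parallelism definition via compactness of the unit sphere in $\mathbb{C}^n$, and $(ii)\Leftrightarrow(iii)$ being a structural identification of the extremal vectors $x,y$ with vectors of the form $Uw$, $Vw$.

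\textbf{Step 1: $(i)\Rightarrow(ii)$.} Suppose $\|A+\lambda B\|_\infty=\|A\|_\infty+\|B\|_\infty$ for some $\lambda\in\mathbb{T}$. Since the operator norm is attained, choose a unit vector $y$ with $\|(A+\lambda B)y\|=\|A\|_\infty+\|B\|_\infty$, and set $x=(A+\lambda B)y/\|(A+\lambda B)y\|$. Then $x^*(A+\lambda B)y=\|A\|_\infty+\|B\|_\infty$, and the trivial bounds $|x^*Ay|\le\|A\|_\infty$ and $|x^*By|\le\|B\|_\infty$ force $|x^*Ay|=\|A\|_\infty$, $|x^*By|=\|B\|_\infty$ with matching phases. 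Absorbing a unit scalar into $x$ yields $x^*Ay=\|A\|_\infty$.

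\textbf{Step 2: $(ii)\Rightarrow(i)$.} Write $x^*By=e^{-i\theta}\|B\|_\infty$ and set $\lambda=e^{i\theta}$. Then $x^*(A+\lambda B)y=\|A\|_\infty+\|B\|_\infty$, so $\|A+\lambda B\|_\infty\ge\|A\|_\infty+\|B\|_\infty$, and the triangle inequality gives equality.

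\textbf{Step 3: $(ii)\Rightarrow(iii)$.} This is the heart of the matter. From $x^*Ay=\|A\|_\infty$ together with $\|x\|=\|y\|=1$ and Cauchy--Schwarz, the chain $\|A\|_\infty=x^*Ay\le\|Ay\|\le\|A\|_\infty$ collapses, forcing $\|Ay\|=\|A\|_\infty$ and $x=Ay/\|A\|_\infty$. Applying $A^*A$ to $y$ and using that $y$ is a top right singular vector of $A$ gives $A^*Ay=\|A\|_\infty^2 y$, whence $A^*x=\|A\|_\infty y$ and $AA^*x=\|A\|_\infty^2 x$. So $x$ lies in the top eigenspace of $AA^*$, and can be written $x=Uw$ for a unit vector $w$ in the appropriate $\mathbb{C}^k$. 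Then $y=A^*x/\|A\|_\infty=(A^*U/\|A\|_\infty)w=Vw$, so the same $w$ works for both $x$ and $y$. Observing that $AV=\|A\|_\infty U$ gives $U^*AV=\|A\|_\infty I_k$, so the orthonormality of the columns of $V$ (which follows from $v_j^*v_i=u_j^*AA^*u_i/\|A\|_\infty^2=\delta_{ij}$) is consistent. Finally, choose $\lambda\in\mathbb{T}$ so that $\lambda x^*By=-\|B\|_\infty$ (possible since $|x^*By|=\|B\|_\infty$); then $w^*(\lambda U^*BV)w=\lambda x^*By=-\|B\|_\infty$, showing $-\|B\|_\infty\in W(\lambda U^*BV)$.

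\textbf{Step 4: $(iii)\Rightarrow(ii)$.} Given $\lambda\in\mathbb{T}$ and a unit vector $w\in\mathbb{C}^k$ with $w^*(\lambda U^*BV)w=-\|B\|_\infty$, set $x=Uw$ and $y=Vw$. Both are unit vectors by orthonormality of the columns of $U$ and $V$. Using $U^*AV=\|A\|_\infty I_k$ (established in Step~3), we get $x^*Ay=\|A\|_\infty w^*w=\|A\|_\infty$, while $|x^*By|=|w^*U^*BVw|=\|B\|_\infty$ by construction. This is exactly (ii).

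\textbf{Main obstacle.} The nontrivial step is $(ii)\Rightarrow(iii)$: one must recognize that the Cauchy--Schwarz saturation pins $x$ inside the top eigenspace of $AA^*$ and forces $y$ to be the \emph{same} coordinate vector $w$ transported through $V$. Once this joint parametrization $x=Uw$, $y=Vw$ is in hand, membership of $-\|B\|_\infty$ in the numerical range $W(\lambda U^*BV)$ is just the rewriting of $|x^*By|=\|B\|_\infty$ with the right unimodular adjustment.
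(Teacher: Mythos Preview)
Your proof is correct, but it follows a different route from the paper's. The paper argues $(i)\Leftrightarrow(ii)$ and $(i)\Leftrightarrow(iii)$ by first invoking the known equivalence between norm-parallelism and Birkhoff--James orthogonality, namely $A\parallel B$ iff $A\perp_B(\|B\|_\infty A+\lambda\|A\|_\infty B)$ for some $\lambda\in\mathbb{T}$, and then applying Li--Schneider's characterization of $\perp_B$ in the operator norm (their Theorem~3.1), which directly furnishes both the pair $(x,y)$ of condition~(ii) and the numerical-range condition of~(iii). Your argument, by contrast, is self-contained: for $(i)\Leftrightarrow(ii)$ you use norm attainment and the triangle-inequality collapse, and for $(ii)\Leftrightarrow(iii)$ you use the equality case of Cauchy--Schwarz to pin $x$ in the top eigenspace of $AA^*$ and obtain the joint parametrization $x=Uw$, $y=Vw$. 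Your approach avoids the external citation and makes the singular-vector structure explicit; the paper's approach is shorter on the page and keeps the result within the parallelism/$\perp_B$ framework that the section is built around. Both are valid; note only that the statement's ``$U\in\mathcal{M}_{n\times n}$'' should really read $U\in\mathcal{M}_{n\times k}$ with $k$ the multiplicity of the top eigenvalue, which you handle correctly with $I_k$.
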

 \begin{proof}
$(i) \Rightarrow (ii)$: Let $ A \| B $. Thus
 \begin{align*}
A \perp_{B} (\|B\|_{\infty}A + \lambda \|A\|_{\infty}B)
\end{align*}
for some $\lambda \in \mathbb{T}$. By \cite[Theorem 3.1]{Li1}, there exist unit vectors  $x \in \mathcal{M}_{n\times 1}$ and $y \in \mathcal{M}_{n\times 1}$ such that $\|A\|_{\infty}=x^*Ay$ and $x^*(\|B\|_{\infty}A + \lambda \|A\|_{\infty}B)y= 0$. Thus we get  $| x^*By|= \|B\|_{\infty}$.

$(ii) \Rightarrow (i)$: It is obvious.

$(i) \Rightarrow (iii)$: If $ A \| B $, then
 \begin{align*}
A \perp_{B} (\|B\|_{\infty}A + \lambda \|A\|_{\infty}B)
\end{align*}
for some $\lambda \in \mathbb{T}$. By \cite[Theorem 3.1]{Li1}, for any $U \in \mathcal{M}_{n\times n}$
 with orthonormal columns that form a basis for the eigenspace of $AA^*$ corresponding to the largest eigenvalue and $V = \frac{1}{\|A\|_{\infty}}A^*U\in  \mathcal{M}_{n\times n}$, we have
 \begin{align*}
0 \in W( U^*(\|B\|_{\infty}A + \lambda \|A\|_{\infty}B)V) &= W(\|B\|_{\infty}\|A\|_{\infty}V^*V + \lambda \|A\|_{\infty}U^*BV)\\
&=W(\|B\|_{\infty} \|A\|_{\infty}I + \lambda \|A\|_{\infty}U^*BV).
\end{align*}
Thus there exists a unit vector $\zeta \in \mathcal{M}_{n\times 1}$  such that $\langle (\|B\|_{\infty}I + \lambda \|A\|_{\infty}U^*BV)\zeta, \zeta \rangle=0$
or\\
 $\langle \lambda U^*BV \zeta, \zeta \rangle=-\frac{\|B\|_{\infty}}{\|A\|_{\infty}}$.
Hence
 \begin{align*}
-\|B\|_{\infty} \in W(\lambda U^*BV)
\end{align*}
for some $\lambda \in \mathbb{T}$.

$(iii) \Rightarrow (i)$: It is obvious.
 \end{proof}

 \begin{remark}
By the equivalence (i)$\Leftrightarrow$(ii) of Proposition \ref{p2}, $ A \| B $, in $\|\cdot\|_{\infty}$ if and only if there exist unit vectors  $x \in \mathcal{M}_{m\times 1}$ and $y \in \mathcal{M}_{n\times 1}$ such that $\|A\|_{\infty}=x^*Ay$ and $| x^*By|= \|B\|_{\infty}$, this condition is equivalent to
the fact that there is a unit vector $y \in \mathcal{M}_{n\times 1}$ such that $\|A\|_{\infty} = \langle Ay, Ay  \rangle^\frac{1}{2}$ and $|\langle Ay, By  \rangle| = \|A\|_{\infty}\|B\|_{\infty}$.
 \end{remark}
\begin{remark}
Suppose that $A, B \in \mathcal{M}_{n\times n}$. If $A \| B$ in $\| \cdot\|_{(k)}$, then $A \perp_{B} (\| B\|_{(k)}A + \lambda \| A\|_{(k)}B)$.
So,  by \cite[Remark 1]{Gr}, there exists a matrix  $F \in \mathcal{M}_{n\times n}$ such that $\| F\|_{\infty}\leq 1,~~~\| F\|_{1}\leq k,~~~{\rm tr}(F^*A) = \| A\|_{(k)}$ and ${\rm tr}\bigg(F^*(\| B\|_{(k)}A + \lambda \| A\|_{(k)}B)\bigg) = 0$. Hence $A \| B$ in $\| \cdot\|_{(k)}$ if and only if there exists a matrix  $F \in \mathcal{M}_{n\times n}$ such that $\| F\|_{\infty}\leq 1,~~~\| F\|_{1}\leq k,~~~{\rm tr}(F^*A) = \| A\|_{(k)}$ and $|{\rm tr}(F^*B)| = \| A\|_{(k)}$.
\end{remark}

Now by \cite[Theoren 3.2]{Li1}, we obtain the following result.

\begin{corollary}
  Let $1<p<\infty$ and $A, B \in \mathcal{M}_{n\times n}$, where $A=DC$ for some positive matrices
   $D \in \mathcal{M}_{m\times m}$ and $C \in \mathcal{M}_{m\times n}$ with $CC^*=I_{n}$.
   Then $ A \| B $, in $\|\cdot\|_p$ if and only if  $|{\rm tr}(D^{p-1}CB^*)| = \frac{\|B\|_p\|D\|_p^p}{\|A\|_p}$.
    \end{corollary}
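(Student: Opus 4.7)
The plan is to reduce the norm-parallelism relation $A \| B$ in $\|\cdot\|_p$ to a Birkhoff--James orthogonality statement and then invoke \cite[Theorem 3.2]{Li1} to translate that orthogonality into a trace identity. Exactly as in the proofs of Proposition \ref{p2} and in Section 2, one has $A \| B$ in $\|\cdot\|_p$ if and only if
\[
A \perp_{B} \bigl(\|B\|_p A + \lambda \|A\|_p B\bigr)
\]
for some $\lambda \in \mathbb{T}$. So the whole task reduces to characterizing this single Birkhoff--James orthogonality.

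Assuming \cite[Theorem 3.2]{Li1} asserts, for matrices factored as $A=DC$ with $D\ge 0$ and $CC^*=I_n$, that $A\perp_{B} X$ in $\|\cdot\|_p$ is equivalent to ${\rm tr}(D^{p-1}C X^*)=0$, applying it to $X=\|B\|_p A + \lambda \|A\|_p B$ gives
\[
\|B\|_p\,{\rm tr}(D^{p-1}CA^*) + \overline{\lambda}\,\|A\|_p\,{\rm tr}(D^{p-1}CB^*) = 0.
\]
The key computation is $CA^* = C C^* D = D$, so ${\rm tr}(D^{p-1}CA^*) = {\rm tr}(D^p) = \|D\|_p^p$. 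Substituting and taking absolute values produces
\[
|{\rm tr}(D^{p-1}CB^*)| = \frac{\|B\|_p\,\|D\|_p^p}{\|A\|_p},
\]
which is the forward direction. For the converse, given this identity one chooses $\lambda\in\mathbb{T}$ so that $\overline{\lambda}\,{\rm tr}(D^{p-1}CB^*) = -\|B\|_p\|D\|_p^p/\|A\|_p$, which makes the displayed trace expression vanish; by \cite[Theorem 3.2]{Li1} this yields $A\perp_B(\|B\|_p A+\lambda\|A\|_p B)$, and hence $A\|B$.

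The main obstacle is confirming that \cite[Theorem 3.2]{Li1} is phrased in the explicit trace form we use above; if instead it is stated as a general subdifferential or duality condition, one must first rewrite it for the special factorization $A=DC$. Beyond that, everything is mechanical: the identity $AA^* = D^2$ (hence $\|A\|_p = \|D\|_p$) and the reduction $CA^*=D$ are the only facts that use the structural assumption on $A$.
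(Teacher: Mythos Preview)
Your proof is correct and follows essentially the same route as the paper's own argument: reduce $A\|B$ to $A\perp_B(\|B\|_pA+\lambda\|A\|_pB)$, apply \cite[Theorem 3.2]{Li1} to obtain ${\rm tr}\bigl(D^{p-1}C(\|B\|_pA+\lambda\|A\|_pB)^*\bigr)=0$, and simplify using $CA^*=CC^*D=D$. If anything, you have spelled out the computation ${\rm tr}(D^{p-1}CA^*)=\|D\|_p^p$ and the choice of $\lambda$ in the converse direction more explicitly than the paper does.
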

 \begin{proof}
 Let $ A \| B $. Thus
 \begin{align*}
A \perp_{B} (\|B\|_pA + \lambda \|A\|_pB)
\end{align*}
for some $\lambda \in \mathbb{T}$. By \cite[Theorem 3.2]{Li1}, ${\rm tr}(D^{p-1}C(\|B\|_pA + \lambda \|A\|_pB)^*)=0$.\\
 Thus $|{\rm tr}(D^{p-1}CB^*)| = \frac{\|B\|_p\|D\|_p^p}{\|A\|_p}$.
The converse is obvious.
 \end{proof}

 As a consequence of \cite[Theorem 3.3]{Li1}, we have the following result.

\begin{remark}
 Let $A, B \in \mathcal{M}_{n\times n}$ and $A \| B$ in $\| \cdot \|_1$.
 By a similar computation as previous theorem, we deduce that
 there exists $F \in \mathcal{M}_{n\times n}$ such that $\|F\|_{\infty}\leq 1,~~~{\rm tr}(AF^*)=\|A\|_1$ and $|{\rm tr}(BF^*)| = \|B\|_1$.
 \end{remark}

Let $\nu$ be a norm on $\mathcal{M}_{n\times 1}$, and let $\| \cdot\|_{\nu}$ be the operator norm on $\mathcal{M}_{n\times n}$ induced by $\nu$
defined by
\begin{align*}
\| A\|_{\nu} = \max\{\nu(Ax): x \in \mathcal{M}_{n\times 1}, \nu(x)\leq 1\}.
\end{align*}
The dual norm of $\nu$ is defined by
\begin{align*}
\nu^D(x) = \max\{|\langle x, y  \rangle|: y \in \mathcal{M}_{n\times 1}, \nu(y)\leq 1\}
\end{align*}
and the dual norm of  $\| \cdot\|_{\nu}$ is defined as
\begin{align*}
\| A\|_{\nu}^D = \max\{|\langle A, B  \rangle|: B \in \mathcal{M}_{n\times n}, \|B\|_{\nu}\leq 1\}.
\end{align*}

We say $F \in \mathcal{M}_{n\times n}$ is an extreme point of the unit norm ball of $\mathcal{M}_{n\times n}$ if only if

\begin{enumerate}
\item[(i)] $p = 1$ and $F= xy^*$ for some unit vectors $ x \in \mathcal{M}_{n\times 1}$ and $ y \in \mathcal{N}_{n\times 1}$,

\item[(ii)] $1<p<\infty$ and $\|F\|_{p}= 1$,

\item[(iii)] $p = \infty$ and $FF^*=I_{n}$.
\end{enumerate}

We have the following characterization of the norm parallelism for operator norms in $\mathcal{M}_{n\times n}$.

\begin{proposition}
Let $\| \cdot\|_{\nu}$ be an operator norm on $\mathcal{M}_{n\times n}$ induced by the vector norm $\nu$ on $\mathcal{M}_{n\times n}$.
Let $A, B \in \mathcal{M}_{n\times n}$. Denote by $\mathscr{E}$ and $\mathscr{E}^D$ the set of extreme points of the unit norm balls of $\nu$ and $\nu^D$, respectively. If
\begin{align*}
V(A) = \{xy^*: x\in \mathscr{E}^D, y\in \mathscr{E}, \langle A, xy^*\rangle = \| A\|_{\nu}\},
\end{align*}
then $A \| B$ in $\| \cdot\|_{\nu}$ if and only if there exist $k$ extreme points $x_{1}y_{1}^*, \ldots,x_{k}y_{k}^*$ with $k\leq 3 $ in the complex case and $k\leq 2 $ in the real case, and positive numbers $t_{1}, \ldots, t_{k}$ with $t_{1} + \cdots + t_{k} = 1$ such that
\begin{align*}
|\sum_{j=1}^k t_{j}\langle B, x_{j}y_{j}^*\rangle |= \| B\|_{\nu}.
\end{align*}
\end{proposition}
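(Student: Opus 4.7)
The strategy is to translate the norm-parallelism into a Birkhoff--James orthogonality statement and then feed it into the Carath\'eodory--type characterization of Birkhoff--James orthogonality for operator norms developed in \cite{Li1}. The starting point is the reformulation, already used repeatedly in Proposition~\ref{p2} and Corollary~\ref{transitive} and originally due to \cite[Corollary~2.5]{Z2}, that $A\|B$ in $\|\cdot\|_\nu$ if and only if
\[
A\perp_{B}\bigl(\|B\|_\nu A+\lambda\|A\|_\nu B\bigr)\qquad\text{for some }\lambda\in\mathbb{T}.
\]

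The second ingredient is the classical identification of the extreme points of the unit ball of $\|\cdot\|_\nu^{D}$ on $\mathcal{M}_{n\times n}$ as the rank--one matrices $xy^*$ with $x\in\mathscr{E}^D$ and $y\in\mathscr{E}$. Combined with the operator--norm version of \cite[Theorem~3.1]{Li1}, this yields the Birkhoff--James criterion I would invoke: $A\perp_{B}C$ in $\|\cdot\|_\nu$ if and only if there exist extreme points $x_1y_1^*,\dots,x_ky_k^*\in V(A)$ together with positive weights $t_1,\dots,t_k$ with $\sum_{j}t_j=1$ and with $k\leq 3$ in the complex case (resp.\ $k\leq 2$ in the real case), such that
\[
\sum_{j=1}^{k}t_j\,\langle C,x_jy_j^{*}\rangle=0.
\]
The bound on $k$ is the usual Carath\'eodory/Zenger count for Birkhoff--James orthogonality in a complex (resp.\ real) finite--dimensional normed space, specialized to the face of the dual ball determined by $V(A)$.

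Specializing this criterion to $C=\|B\|_\nu A+\lambda\|A\|_\nu B$ and using that $\langle A,x_jy_j^{*}\rangle=\|A\|_\nu$ for each $j$ together with $\sum_{j}t_j=1$, the orthogonality equation becomes
\[
\|A\|_\nu\|B\|_\nu+\lambda\|A\|_\nu\sum_{j=1}^{k}t_j\langle B,x_jy_j^{*}\rangle=0,
\]
so $\sum_{j}t_j\langle B,x_jy_j^{*}\rangle=-\bar{\lambda}\|B\|_\nu$ and in particular $\bigl|\sum_{j}t_j\langle B,x_jy_j^{*}\rangle\bigr|=\|B\|_\nu$, which is the forward direction. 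For the converse, given $x_jy_j^{*}\in V(A)$ and weights $t_j$ with $|\sum_{j}t_j\langle B,x_jy_j^{*}\rangle|=\|B\|_\nu$, I would choose $\lambda\in\mathbb{T}$ so that $\lambda\sum_{j}t_j\langle B,x_jy_j^{*}\rangle=-\|B\|_\nu$; reversing the computation above produces $A\perp_{B}(\|B\|_\nu A+\lambda\|A\|_\nu B)$, and hence $A\|B$.

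The main obstacle is the quantitative bound $k\leq 3$ in the complex case and $k\leq 2$ in the real case. Although $V(A)$ may be large, one must show that any certifying convex combination of extreme points of the face can be trimmed to at most three (resp.\ two) of them; this is precisely the point at which the scalar field matters and is the content of the relevant Carath\'eodory--type lemma underlying \cite[Theorem~3.1]{Li1}. Verifying that this lemma applies in the generality of an arbitrary operator norm $\|\cdot\|_\nu$, and not just the Ky Fan or Schatten situations, is the delicate step; the remaining computation is a short algebraic manipulation as outlined above.
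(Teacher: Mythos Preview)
Your approach is exactly the one taken in the paper: reduce $A\|B$ to $A\perp_B(\|B\|_\nu A+\lambda\|A\|_\nu B)$, invoke the Li--Schneider characterization of Birkhoff--James orthogonality for operator norms, and simplify using $\langle A,x_jy_j^*\rangle=\|A\|_\nu$. Your only misstep is citing \cite[Theorem~3.1]{Li1}; the result you actually need---valid for an arbitrary induced operator norm $\|\cdot\|_\nu$ and already containing the Carath\'eodory bound $k\le 3$ (complex) or $k\le 2$ (real)---is \cite[Proposition~4.2]{Li1}, so the ``delicate step'' you flag is not something you have to reprove.
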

\begin{proof}
 Let $ A \| B $. Then
 \begin{align*}
A \perp_{B} (\| B\|_{\nu}A + \lambda \| A\|_{\nu}B)
\end{align*}
for some $\lambda \in \mathbb{T}$. So by  \cite[Proposation 4.2]{Li1},  there exist $k$ extreme points $x_{1}y_{1}^*, \ldots,x_{k}y_{k}^*$ with $k\leq 3 $ in the complex case and $k\leq 2 $ in the real case, and positive numbers $t_{1}, \ldots, t_{k}$ with $t_{1} + \cdots + t_{k} = 1$ such that
 \begin{align*}
\sum_{j=1}^k t_{j}\langle (\| B\|_{\nu}A + \lambda \| A\|_{\nu}B), x_{j}y_{j}^* \rangle = 0.
\end{align*}
Thus $|\sum_{j=1}^k t_{j}\langle B, x_{j}y_{j}^* \rangle|= \| B\|_{\nu}$. The converse  is obvious.
\end{proof}

\begin{remark}\label{R2}
Suppose that $\nu$ is a norm on $\mathcal{M}_{n\times 1}$ and  that $\| \cdot\|_{\nu}$ is the corresponding operator norm on  $\mathcal{M}_{n\times n}$. Given $A, B \in \mathcal{M}_{n\times n}$. If there exists a vector $y \in \mathcal{M}_{n\times 1}$ with $\nu(y)=1$ such that $\nu(Ay)=\| A\|_{\nu}$ and $\nu\bigg(Ay + \mu(\nu(By)Ay + \lambda\nu(Ay)By)\bigg) \geq \nu(Ay)$ for all $\mu \in \mathbb{C}$, that is, $ Ay \| By $. Then
 \begin{align*}
\| A +\mu(\| B\|_{\nu}A + \lambda \| A\|_{\nu}B)\|_{\nu}\geq \nu\bigg(Ay + \mu(\nu(By)Ay + \lambda\nu(Ay)By)\bigg) \geq \nu(Ay) = \| A\|_{\nu}
\end{align*}
for all  for all $\mu \in \mathbb{C}$; hence $A\|B$. On the other hand  \cite[Proposation 4.2 and Example 4.3]{Li1} show that  the converse of the above result is not true, in general.
\end{remark}


\end{document}